\documentclass[pdflatex,sn-mathphys-num]{sn-jnl}

\usepackage{graphicx}%
\usepackage{multirow}%
\usepackage{amsmath,amssymb,amsfonts}%
\usepackage{amsthm}%
\usepackage{mathrsfs}%
\usepackage[title]{appendix}%
\usepackage{xcolor}%
\usepackage{textcomp}%
\usepackage{manyfoot}%
\usepackage{booktabs}%
\usepackage{algorithm}%
\usepackage{algorithmicx}%
\usepackage{algpseudocode}%
\usepackage{listings}%
\usepackage[utf8]{inputenc}
\usepackage{doi}

%% as per the requirement new theorem styles can be included as shown below
\theoremstyle{thmstyleone}%
\newtheorem{theorem}{Theorem}%  meant for continuous numbers
%%\newtheorem{theorem}{Theorem}[section]% meant for sectionwise numbers
%% optional argument [theorem] produces theorem numbering sequence instead of independent numbers for Proposition
% 
\newtheorem{corollary}[theorem]{Corollary}% 

\theoremstyle{thmstyletwo}%

\theoremstyle{thmstylethree}%
\newtheorem{definition}{Definition}%

\raggedbottom
%%\unnumbered% uncomment this for unnumbered level heads

\begin{document}

\title[]{Cryptoanalysis of a tropical triad matrix semiring key exchange protocol}

%%=============================================================%%
%% GivenName	-> \fnm{Joergen W.}
%% Particle	-> \spfx{van der} -> surname prefix
%% FamilyName	-> \sur{Ploeg}
%% Suffix	-> \sfx{IV}
%% \author*[1,2]{\fnm{Joergen W.} \spfx{van der} \sur{Ploeg} 
%%  \sfx{IV}}\email{iauthor@gmail.com}
%%=============================================================%%

\author*[1]{\fnm{Alvaro} \sur{Otero Sanchez}}\email{aos@ual.es}

\affil*[1]{\orgdiv{Department}, \orgname{Universidad de Almería}, \orgaddress{\street{Carretera de Sacramento, s/n}, \city{La Cañada de San Urbano}, \postcode{04120}, \state{Almería}, \country{España}}}

%%==================================%%
%% Sample for unstructured abstract %%
%%==================================%%

\abstract{This article analyzes a key exchange protocol based on the triad tropical semiring, recently proposed by Jackson, J. and Perumal, R. We demonstrate that the triad tropical semiring is isomorphic to a circulant matrix over tropical numbers. Consequently, matrices in this semiring can be represented as tropical matrices. As a result, we conduct a cryptanalysis of the key exchange protocol using an algorithm introduced by Sulaiman Alhussaini, Craig Collett, and Serge˘ı Sergeev to solve the double discrete logarithm problem over tropical matrices.}

\keywords{Cryptography, Tropical semiring, Key-exchange}

%%\pacs[JEL Classification]{D8, H51}

%%\pacs[MSC Classification]{35A01, 65L10, 65L12, 65L20, 65L70}

\maketitle

\section{Introduction}\label{sec1}

Modern cryptography relies on the so called discrete logarithm problem over cyclic group, and in particular over elliptic curves. However, in 1994 Shor presented a quantum algorithm able to solve this problem in polynomial time. This was the beginning of the post quatum cryptography; the research of new cryptography protocols that remains safe under the assumption of the existence of a quantum computer. 

Maze et al. \cite{maze2007} introduced a general framework for defining key-exchange protocols based on a semigroup action on a set. Their work can be seen as a generalization of the Diffie-Hellman \cite{diffie1976new} and ElGamal \cite{elgamal1985public} schemes within an algebraic setting. In their original paper, they proposed an example using a finite simple semiring, which was recently subjected to cryptanalysis in \cite{otero2024cryptanalysis}. However, several cryptographic protocols have since been developed following the ideas of Maze et al. For instance, in \cite{kahrobaei2016group}, Kahrobaei and Koupparis explored a key-exchange protocol based on non-commutative group actions, extending the original framework. Other examples include the work of Gnilke and Zumbrägel \cite{gnilke2024cryptographic}, who linked Maze et al.'s ideas to recent advances in isogeny-based cryptography. Additionally, \cite{Olvera19} introduced a key-exchange protocol based on twisted group rings, incorporating a group key-exchange agreement.

In this research, Grigoriev and Shpilrain proposed the use of tropical semiring for public key exchange \cite{Grigoriev2013}, \cite{Grigoriev2019} and for digital signatures \cite{Chen2024}. Nevertheless, the first attempt was analyced by \cite{Kotov2018}, where it was introduced the so called Kotov-Ushakov attack, an heuristic attack that knowadays has become the standar attack against tropical cryptography. In \cite{OteroSanchez2024}, the authors propose a new deterministic attack against a public key exchange protocol based on tropical semiring, that improve the Kotov-Ushakov attack in the sense that it can be used in the sames scenarios, but with a deterministic output. In addition, the other two tropical cryptographyc protocols proposed by Grigoriev and Shpilrain has been proven not secure in \cite{Muanalifah2022}, \cite{Isaac2021} and \cite{Rudy2020}. 

Far from being abandoned, this ideas has been further explored. In \cite{Jackson2024}, the authors recently proposed the use of triad matrix semiring. This semiring is a generalization of tropical semiring where elements are a vector of 3 entries, and with a modified addition and multiplication that endow them with the structure of semiring. Based on them, the authors stablish a public key exchange protocol that use matrix with entries over triad semiring.

\section{Results}\label{sec2}

In this paper, we introduce an isomorphism between triad semiring and circuland matrix over tropical semiring. As a result, it is possible to satblish an isomorhism between triad matrix semiring and tropical matrix semiring, and therefore reinterpretate the public key exchange in terms of tropical matrix. This prove that the previous protocol can be seen as the Stickel protocol where instead of taking a polynomial, only a monomial is used. Finally, this protol is suscentible to the attack introduced in \cite{cryptoeprint:2024/010} .

\section{Preliminaries}\label{sec3}
In this secition we will introduce some basic background on tropical semiring as well as triad semiring. 

\begin{definition}
    A semiring $R$ is a non-empty set together with two operations $+$ and $\cdot$ such that $(S,+)$ is a commutative monoid, $(S,\cdot)$ is a monoid and 
    the following distributive laws hold:

    \begin{equation*}
        \begin{split}
            a(b+c) & = ab + ac \\
            (a+b)c & = ac + bc
        \end{split}
    \end{equation*}
    
    \noindent We say that $(R,+,\cdot)$ is additively idempotent if $a+a=a$ for all $a \in R$.
\end{definition}

\begin{definition}
    Let $R$ be a semiring and $(M,+)$ be a commutative semigroup with identity $0_M$. $M$ is a right semimodule over $R$ if there is an external operation  $\cdot \ : M\times R\rightarrow M$ such that

    \begin{equation*}
    \begin{split}
             (m\cdot a)\cdot b & = m\cdot (a\cdot b),\\
             m\cdot (a+b) & = m\cdot a + m\cdot b, \\
             (m+n)\cdot a & = m\cdot a +n\cdot a, \\
             0_M\cdot a & = 0_M,
    \end{split}
    \end{equation*}

    \noindent for all $a, b \in R$ and $m,n \in M$. We will denote  $m\cdot a$ by simple concatenation $ma$. 
\end{definition}

\begin{definition}
  Let $\mathbb{T}=\mathbb{Z} \cup \{ -\infty \}$. The tropical semiring is the semiring $(\overline{\mathbb{Z}},\oplus,\odot)$, where 
  \begin{align*}
     a \oplus b &= \max\{a,b\} \\
     a \odot b &= a+b
  \end{align*}
\end{definition}
Using tropical semiring, in \cite{Jackson2024} it is introduced the trial semiring
\begin{definition}
    Let $\overline{\mathbb{T}}=\mathbb{T}\times\mathbb{T}\times\mathbb{T}$. Trial semiring is the semiring $\left(\overline{\mathbb{T}},\overline{\oplus},\overline{\odot}\right)$ where
    \begin{align*}
        (a, b, c) \oplus (d, e, f) & = (a \oplus d, b \oplus e, c \oplus f) \\
        (a, b, c) \odot (d, e, f) & = \left( (a \odot d) \oplus (b \odot f) \oplus (c \odot e), (a \odot e) \oplus (b \odot d) \oplus (c \odot f), (a \odot f) \oplus (b \odot e) \oplus (c \odot d) \right)
    \end{align*}
    for all $a,b,c,d,e,f \in \mathbb{T}$
\end{definition}
In \cite{Jackson2024} it is proven that the previous definition is a semiring
\begin{theorem}
    $\left(\overline{\mathbb{T}},\overline{\oplus},\overline{\odot}\right)$ is a semiring
\end{theorem}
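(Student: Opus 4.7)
The plan is to leverage the circulant matrix representation that is the central theme of the paper, thereby reducing the verification of semiring axioms for $\overline{\mathbb{T}}$ to the well-known semiring structure of the tropical matrix algebra $M_{3}(\mathbb{T})$. First I would introduce the map $\phi \colon \overline{\mathbb{T}} \to M_{3}(\mathbb{T})$ defined by
\begin{equation*}
\phi(a,b,c) = \begin{pmatrix} a & b & c \\ c & a & b \\ b & c & a \end{pmatrix},
\end{equation*}
that is, the $3\times 3$ tropical circulant whose first row is $(a,b,c)$. The map $\phi$ is clearly injective since the first row recovers the triple, and it converts $\overline{\oplus}$ into entrywise tropical sum by construction.

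The heart of the argument is the compatibility of $\phi$ with multiplication: one computes the first row of the tropical product $\phi(a,b,c) \odot \phi(d,e,f)$ using $\oplus=\max$ and $\odot=+$, and the three entries obtained are exactly the three components of $(a,b,c) \, \overline{\odot} \, (d,e,f)$. The remaining rows of the product matrix are then forced to complete the circulant pattern, which simultaneously shows (a) that the product of two tropical circulants is again a circulant, and (b) that $\phi\bigl((a,b,c)\, \overline{\odot}\, (d,e,f)\bigr) = \phi(a,b,c) \odot \phi(d,e,f)$. Entrywise tropical sum obviously preserves circulance, so the image of $\phi$ is a subsemiring of $M_{3}(\mathbb{T})$ and $\phi$ is a semiring isomorphism onto its image. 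Since $M_{3}(\mathbb{T})$ is a semiring, all axioms transfer to $\overline{\mathbb{T}}$: commutativity and associativity of $\overline{\oplus}$, associativity of $\overline{\odot}$, the two distributive laws, and the existence of the identities $(-\infty,-\infty,-\infty)$ for $\overline{\oplus}$ and $(0,-\infty,-\infty)$ for $\overline{\odot}$ (corresponding respectively to the tropical zero matrix and the tropical identity matrix).

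The main obstacle I anticipate is the closure of tropical circulants under tropical multiplication, because that is precisely what guarantees that $\phi(\overline{\mathbb{T}})$ is a subsemiring of $M_{3}(\mathbb{T})$; without it the transfer of axioms would break down. Fortunately this closure is immediate from the same computation that establishes the multiplication formula, since the resulting $(i,j)$ entry of the product depends only on $j-i \pmod 3$, mirroring the familiar classical fact. A purely direct proof by unfolding the triad formulas is of course possible, but it would require checking associativity of $\overline{\odot}$ and both distributivity laws by comparing nested maxima of sums of six variables, which is extremely tedious; the circulant detour trivialises the bookkeeping and, as a bonus, foreshadows the isomorphism with circulant matrices that is the main tool of the subsequent cryptanalysis.
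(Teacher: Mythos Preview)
Your proof is correct. The paper itself does not prove this theorem: it simply records that the statement is established in \cite{Jackson2024}, where the axioms are presumably checked by direct expansion. Your argument is therefore genuinely different, and in fact it anticipates the paper's own main structural result (the isomorphism $\overline{\mathbb{T}}\cong \mathrm{Circ}\,\mathbb{M}_3(\mathbb{T})$ proved in the later section). By verifying that $\phi$ is injective and respects both $\overline{\oplus}$ and $\overline{\odot}$ \emph{before} assuming any algebraic structure on $\overline{\mathbb{T}}$, and by observing that the image contains the tropical zero and identity matrices and is closed under the operations of $\mathbb{M}_3(\mathbb{T})$, you legitimately pull the semiring axioms back from $\mathbb{M}_3(\mathbb{T})$ and obtain the isomorphism as a free byproduct. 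What this buys you is that the associativity of $\overline{\odot}$ and both distributive laws are absorbed into a single first-row computation of a tropical matrix product; the paper's ordering, by contrast, outsources those checks to an external reference and then repeats essentially the same matrix computation when it later proves the isomorphism. Your reorganisation is logically cleaner and self-contained.
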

The set of square matrix of $n\times n$ over trial semiring will be denoted by $\mathbb{M}_n(\overline{\mathbb{T}})$.

Finally, we will introduce circulant matrix. 
\begin{definition}
Let \( C \in \mathbb{M}_n(S) \) with $S$ a semiring. The matrix \( C \) is called a circulant matrix if $C$ of the form 
\[
C = \begin{pmatrix}
c_0 & c_1 & c_2 & \cdots & c_{n-1} \\
c_{n-1} & c_0 & c_1 & \cdots & c_{n-2} \\
c_{n-2} & c_{n-1} & c_0 & \cdots & c_{n-3} \\
\vdots & \vdots & \vdots & \ddots & \vdots \\
c_1 & c_2 & c_3 & \cdots & c_0
\end{pmatrix}.
\]
with $c_i \in S \forall i = 0,\cdots, n-1$. In this case, $C$ will be denoted as $C[c_0, c_1, \dots, c_{n-1}]$. The set of all circulant matrix of $\mathbb{M}_n(S)$ will be denoted as $Circ\mathbb{M}_n(S)$
\end{definition}

\section{Double action over tropical}
In \cite{cryptoeprint:2024/010}, the authors explore the tropical two-sided discrete logarithm 
\begin{definition}
   Given $D_1, D_2, M, U \in \mathbb{M}_n(\mathbb{T})$ such that $U=D_1^{\odot t_1}M D_2^{\odot t_2}$ for some $t_1,t_2 \in \mathbb{N}$. The tropical two-sided discrete logarithm  is to find $t'_1,t'_2 \in \mathbb{N}$ such that $U=D_1^{\odot t'_1}M D_2^{\odot t'_2}$
\end{definition}
To solve this problem, they introduce the maximum cycle mean of a tropical matrix
\begin{definition}
    For \( A \in \mathbb{M}_n(\mathbb{T} \), the maximum cycle mean \( \lambda(A) \) is defined as

\[
\lambda(A) = \bigoplus_k \bigoplus_{i_1, \dots, i_k \in [n]} \sqrt[\otimes k]{a_{i_1 i_2} \otimes \dots \otimes a_{i_k i_1}.}
\]
\end{definition}
If we consider $A\in \mathbb{M}_n(\mathbb{T}$ as the matrix of weith of a graph of $n$ vertex, we can then compute it critical cycles. We also need the kleen star
\begin{definition}
    Let $A\in \mathbb{M}_n(\mathbb{T})$ such that $\lambda(A)\leq 0$, then the kleen star $A^*$ is the following matrix
    \[
    A^* = I \oplus A \oplus A^{\odot 2} \oplus \cdots A^{\odot n-1}
    \]
    with $I$ the tropical identity in $\mathbb{M}_n(\mathbb{T})$
\end{definition}
The main theorem of \cite{cryptoeprint:2024/010} is the following one.
\begin{theorem}
Let \( A \in \mathbb{R}^{n \times n}_{\max} \) have \( \lambda = \lambda(A) \neq -\infty \), and let \( Z \) be a critical cycle of \( A \) with length \( l_Z \). Then for some integer \( T_{weak} \), we have:

\[
A^{\odot t} = \lambda^{\odot t} \odot C_Z \odot S^{\odot t}_Z \odot R_Z \oplus B^{\odot t}_Z \quad \forall t \geq \text{Tweak},
\]

and

\[
A^{\odot t} = \lambda^{\odot t} \odot \left( C_Z \odot S^{\odot t (\text{rem} \, l_Z)}_Z \odot R_Z \right) \oplus B^{\odot t}_Z \quad \forall t \geq \text{Tweak},
\]

where \( t (\text{rem} \, l_Z) \) is the remainder when \( t \) is divided by \( l_Z \), and \( C_Z \), \( S_Z \), \( R_Z \), and \( B_Z \) are defined by:

\[
(C_Z)_{ij} =
\begin{cases}
(U_Z)_{ij} & \text{if } j \in Z, \\
-\infty & \text{otherwise},
\end{cases}
\]

\[
(R_Z)_{ij} =
\begin{cases}
(U_Z)_{ij} & \text{if } i \in Z, \\
-\infty & \text{otherwise},
\end{cases}
\]

\[
(S_Z)_{ij} =
\begin{cases}
(a_{ij} \odot \lambda^{-1}) & \text{if } (i, j) \in Z, \\
-\infty & \text{otherwise},
\end{cases}
\]

\[
(B_Z)_{ij} =
\begin{cases}
-\infty & \text{if } i \in Z \text{ or } j \in Z, \\
a_{ij} & \text{otherwise},
\end{cases}
\]

where \( U_Z = \left( (A \odot \lambda^{-1})^{\odot l_Z} \right)^* \) (i.e., the Kleene star of \( (A \odot \lambda^{-1})^{\odot l_Z} \)).
\end{theorem}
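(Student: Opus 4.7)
The plan is to reduce to the case $\lambda=0$ by normalization and then argue entirely at the level of walks in the weighted digraph with adjacency matrix $A$. Put $A' = A \odot \lambda^{-1}$, so that $\lambda(A')=0$ and $A^{\odot t} = \lambda^{\odot t} \odot (A')^{\odot t}$; this accounts for the overall factor $\lambda^{\odot t}$ in the theorem. Under this normalization the critical cycle $Z$ has total weight zero while every non-critical cycle has strictly negative total weight. Throughout I would read $\bigl((A')^{\odot t}\bigr)_{ij}$ as the maximum weight of a walk of length $t$ from $i$ to $j$.

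First, I would split the walks of length $t$ from $i$ to $j$ into two mutually exclusive classes: those that avoid the vertex set of $Z$ and those that meet it at least once. The maximum weight over the first class is by definition $\bigl((B_Z)^{\odot t}\bigr)_{ij}$, since $B_Z$ is the adjacency matrix of the induced subgraph on the complement of $Z$. Because that subgraph has maximum cycle mean strictly less than $0$, its contribution grows sub-linearly compared with the critical term, which is what allows the two summands to coexist in the final formula.

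For walks of the second class, let $v$ be the first vertex of $Z$ visited and $w$ the last. The walk then decomposes into a prefix $i \to v$ of length $t_1$, a middle segment $v \to w$ of length $t_2$, and a suffix $w \to j$ of length $t_3$ with $t_1+t_2+t_3=t$. Two key observations drive the argument: (i) any closed excursion off $Z$ that returns to $Z$ traverses a non-critical cycle and therefore has strictly negative weight, so the optimum middle segment runs along the edges of $Z$, attaining weight zero precisely when $t_2$ is congruent to the cyclic distance from $v$ to $w$ modulo $l_Z$, which is exactly what $S_Z^{\odot t_2}$ computes; (ii) a prefix walk ending at $v$ can be lengthened at no cost by inserting full loops of $Z$ at $v$, and conversely any walk longer than $(n-1)l_Z$ contains a cycle of non-positive weight that can be removed without decreasing the weight, so the maximum-weight prefix from $i$ to $v$ may be chosen of length in $\{0, l_Z, 2l_Z, \ldots, (n-1)l_Z\}$; the same argument applies to the suffix. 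Bounded-length walks whose length is a multiple of $l_Z$ are exactly what the Kleene star $U_Z = ((A')^{\odot l_Z})^*$ enumerates, and its restriction to columns in $Z$ (resp.\ rows in $Z$) is $C_Z$ (resp.\ $R_Z$).

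Assembling the three parts, for $t \geq T_{weak}$ large enough to accommodate a prefix and a suffix of length at most $(n-1)l_Z$ together with a middle segment of non-negative length and correct residue modulo $l_Z$, one obtains
\[
\bigl((A')^{\odot t}\bigr)_{ij} = \bigl((B_Z)^{\odot t} \oplus C_Z \odot S_Z^{\odot t} \odot R_Z\bigr)_{ij},
\]
and multiplying by $\lambda^{\odot t}$ yields the first displayed formula. The second formula is then a corollary: since each full traversal of $Z$ contributes weight zero, $S_Z^{\odot l_Z}$ acts as the identity on the vertices of $Z$, so $S_Z^{\odot t}$ depends only on $t \;\mathrm{rem}\; l_Z$ when sandwiched between $C_Z$ and $R_Z$, both of which vanish off $Z$. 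The main obstacle I anticipate is making the cycle-removal and loop-insertion arguments quantitative enough to produce an explicit $T_{weak}$ that simultaneously bounds the prefix and suffix lengths and ensures that the residue-correction loops fit inside the middle segment; this is essentially the classical cyclicity phenomenon in max-plus algebra, and while each individual step is routine, tracking all constants carefully is what consumes the bulk of the proof.
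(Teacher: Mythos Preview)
The paper does not prove this theorem: it is quoted without argument from Alhussaini, Collett and Sergeev (reference \cite{cryptoeprint:2024/010} in the paper) as background for the cryptanalytic algorithm, so there is no proof here to compare your proposal against.

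For what it is worth, your sketch follows the standard route to CSR-type results in max-plus spectral theory---normalize so that $\lambda=0$, split walks according to whether they meet the chosen critical cycle, and use zero-weight loop insertion on $Z$ to reach the eventual periodic regime---and the ingredients you list are the right ones. One point you would need to tighten in a full write-up: your decomposition $t_1+t_2+t_3=t$ with $t_1,t_3\in\{0,l_Z,\dots,(n-1)l_Z\}$ does not literally match the displayed formula, where the middle factor carries the \emph{full} exponent $t$ (it is $S_Z^{\odot t}$, not $S_Z^{\odot(t-t_1-t_3)}$), so that the CSR term actually encodes walks of length congruent to $t$ modulo $l_Z$ rather than of length exactly $t$. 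The loop-insertion/deletion mechanism you invoke is precisely what bridges ``length $\equiv t\pmod{l_Z}$'' and ``length $=t$'' once $t\ge T_{weak}$, but making that congruence-versus-equality step explicit is where the careful bookkeeping you anticipate really enters.
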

This theorem is use to stablish the following equation
\begin{equation} \label{Eq5}
\left\{
\begin{aligned}
    \beta - \tau - \lambda_1 \cdot \bar{t}_1 - \lambda_2 \cdot \bar{t}_2 &= (\lambda_1 \cdot l_Z) \cdot x + (\lambda_2 \cdot l_W) \cdot y \\
    x &\geq \frac{(n-1) l_Z - \bar{t}_1}{l_Z} \\
    y &\geq \frac{(n-1) l_W - \bar{t}_2}{l_W}
\end{aligned}
\right.
\end{equation}
Finally, they present the following algorithm

\begin{algorithm} \label{Algorithm}
\caption{Solving the tropical two-sided discrete logarithm with shift using CSR}
\label{alg:TTDL}
\begin{algorithmic}[1]
\Require \( U, D_1, M, D_2 \)
\Ensure \( t'_1, t'_2, \tau \)
\State Calculate \( \lambda(D_1) = \lambda_1 \), \( \lambda(D_2) = \lambda_2 \)
\State Find a critical cycle \( Z \) from \( D_1 \) and \( W \) from \( D_2 \), with lengths \( l_Z \) and \( l_W \), respectively
\State Compute \( S_Z, R_Z, C_W, S_W \) as in Theorem 2.3
\For{\( \bar{t}_1 = 0 \) to \( l_Z \)}
    \For{\( \bar{t}_2 = 0 \) to \( l_W \)}
        \If{\( (U - (S_Z^{\odot \bar{t}_1} \odot R_Z \odot M \odot C_W \odot S_W^{\odot \bar{t}_2}))_{ij} = \beta \) for some \( \beta \in \mathbb{R} \) and for all \( i, j \) where \( i \in Z \) and \( j \in W \)}
            \State Check if (\ref{Eq5}) is solvable
            \If{(\ref{Eq5}) is solvable}
                \State Return \( (t'_1, t'_2, \tau) \) where:
                \[
                t'_1 = l_Z \cdot x + \bar{t}_1, \quad t'_2 = l_W \cdot y + \bar{t}_2
                \]
            \EndIf
        \EndIf
    \EndFor
\EndFor
\end{algorithmic}
\end{algorithm}

\section{Mathematical Results}\label{sec7}
First, we will start with an isomorphism between trial semiring and tropical circulant matrix. 
\begin{theorem}
    There is an isomorhism between trial semring $\overline{\mathbb{T}}$ and $Circ\mathbb{M}_3(\mathbb{T})$ given by
    \begin{align*}
    \psi : \overline{\mathbb{T}} & \to \text{Circ}\, \mathbb{M}_3(\mathbb{T}) \\
    (a,b,c) & \mapsto C[a,b,c]
\end{align*}
\end{theorem}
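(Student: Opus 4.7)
The map $\psi$ is clearly a bijection onto $\text{Circ}\,\mathbb{M}_3(\mathbb{T})$: each triple in $\overline{\mathbb{T}}$ uniquely determines a $3\times 3$ circulant matrix via its first row, and by the definition of $\text{Circ}\,\mathbb{M}_3(\mathbb{T})$ every element arises in this way from a unique triple. What remains is to verify that $\psi$ intertwines the two semiring operations and sends identities to identities.

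Preservation of addition is immediate, since tropical matrix addition is entrywise and the entrywise sum of two circulants is again a circulant whose generating triple is the componentwise $\oplus$-sum: $C[a,b,c]\oplus C[d,e,f] = C[a\oplus d,\,b\oplus e,\,c\oplus f] = \psi\bigl((a,b,c)\,\overline{\oplus}\,(d,e,f)\bigr)$. The additive identity $(-\infty,-\infty,-\infty)$ of $\overline{\mathbb{T}}$ likewise maps to the zero matrix.

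For multiplicative compatibility, I would expand the tropical product $\psi(a,b,c)\odot\psi(d,e,f)$ entry by entry. Writing out the two circulants explicitly and evaluating $\bigoplus_{k=1}^{3}[\psi(a,b,c)]_{ik}\odot[\psi(d,e,f)]_{kj}$ at each of the nine positions, one checks (i) that the first row $(X_1,X_2,X_3)$ agrees precisely with the three components of $(a,b,c)\,\overline{\odot}\,(d,e,f)$ from the triad definition, and (ii) that rows 2 and 3 are the cyclic shifts $(X_3,X_1,X_2)$ and $(X_2,X_3,X_1)$ of the first row. Then the product lies in $\text{Circ}\,\mathbb{M}_3(\mathbb{T})$ and equals $\psi\bigl((a,b,c)\,\overline{\odot}\,(d,e,f)\bigr)$. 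Finally, the multiplicative identity $(0,-\infty,-\infty)$ of the triad maps to the tropical identity matrix of $\mathbb{M}_3(\mathbb{T})$.

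The only real obstacle is the bookkeeping in checking that all nine entries of the product conform to the circulant pattern. This can be shortened by first observing the general fact that $n\times n$ circulants form a subsemiring of $\mathbb{M}_n(\mathbb{T})$: the classical ring-theoretic argument, which uses only associativity and distributivity, carries over to any semiring. Once closure under multiplication is in hand, only the first row of the product needs to be computed to identify it with the triad multiplication formula, and the rest is routine.
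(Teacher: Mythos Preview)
Your proposal is correct and follows essentially the same route as the paper: observe that $\psi$ is a bijection, check that addition is preserved componentwise, and verify multiplicativity by an explicit entry-by-entry computation of the tropical matrix product $C[a,b,c]\odot C[d,e,f]$, matching the result with the triad formula. The only additions you make---explicitly noting that the neutral elements are respected and remarking that closure of circulants under multiplication could replace six of the nine entry checks---are mild refinements of the same argument rather than a different strategy.
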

\begin{proof}
    Clearly $\psi$ is biyective, and we only have to prove it is a ring homomorphism. 

\underline{Addition}\\
\begin{align*}
    \psi((a,b,c)\overline{\oplus}(d,e,f))& =\psi(a\oplus d,b \oplus e, c \oplus f)) \\ &= C[a\oplus d,b \oplus e, c \oplus f] \\ &= C[a,b,c] \oplus C[d,e,f] \\ & = \psi(a,b,c) \overline{\oplus} \psi(d,e,f)
\end{align*}
\underline{Multiplication}\\
\begin{align*}
    \psi((a,b,c)\overline{\odot}(d,e,f))& =\psi\left( (a \odot d) \oplus (b \odot f) \oplus (c \odot e), (a \odot e) \oplus (b \odot d) \oplus (c \odot f), (a \odot f) \oplus (b \odot e) \oplus (c \odot d) \right) \\ &= C[\left( (a \odot d) \oplus (b \odot f) \oplus (c \odot e), (a \odot e) \oplus (b \odot d) \oplus (c \odot f), (a \odot f) \oplus (b \odot e) \oplus (c \odot d) \right)] \\ &= \begin{pmatrix}
(a \odot d) \oplus (b \odot f) \oplus (c \odot e) & (a \odot f) \oplus (b \odot e) \oplus (c \odot d) &(a \odot e) \oplus (b \odot d) \oplus (c \odot f)  \\
(a \odot e) \oplus (b \odot d) \oplus (c \odot f) & (a \odot d) \oplus (b \odot f) \oplus (c \odot e) & (a \odot f) \oplus (b \odot e) \oplus (c \odot d) \\
(a \odot f) \oplus (b \odot e) \oplus (c \odot d) & (a \odot e) \oplus (b \odot d) \oplus (c \odot f) & (a \odot d) \oplus (b \odot f) \oplus (c \odot e) \\
\end{pmatrix}.
\\ &= \begin{pmatrix}
a & c & b  \\
b & a & c \\
c & b & a
\end{pmatrix}
\begin{pmatrix}
d & f & e  \\
e & d & f \\
f & e & d 
\end{pmatrix}
\\ & = \psi((a,b,c)) \odot \psi((d,e,f))
\end{align*}
\end{proof}
As a result, we have the following colorary
\begin{corollary} \label{IsomorphismMatrix}
    There exist an isomorhism $\mathbb{M}_n(\overline{\mathbb{T}}) \cong \mathbb{M}_n(Circ\mathbb{M}_3(\mathbb{T})) \cong J  \leq_{Ring} \mathbb{M}_{3n}(\mathbb{T})
$ with $J$ subring of $\mathbb{M}_{3n}(\mathbb{T})$
\end{corollary}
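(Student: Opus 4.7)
The plan is to chain together two semiring isomorphisms, using the entrywise map $\psi$ from the previous theorem as the building block.

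First, I would extend $\psi$ to matrices entrywise. Define
\[
\Psi : \mathbb{M}_n(\overline{\mathbb{T}}) \longrightarrow \mathbb{M}_n(\mathrm{Circ}\,\mathbb{M}_3(\mathbb{T})), \qquad (A_{ij})_{i,j=1}^{n} \longmapsto (\psi(A_{ij}))_{i,j=1}^{n}.
\]
Because addition in a matrix semiring is entrywise and multiplication is expressed via sums and products of entries (using the distributive law twice), the fact that $\psi$ is a bijective semiring homomorphism transfers directly, and $\Psi$ is automatically a bijective semiring homomorphism. This is the standard functoriality of the construction $R \mapsto \mathbb{M}_n(R)$, so no new work beyond the previous theorem is needed here.

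Second, I would use the classical block-matrix identification to embed $\mathbb{M}_n(\mathrm{Circ}\,\mathbb{M}_3(\mathbb{T}))$ into $\mathbb{M}_{3n}(\mathbb{T})$. For $B = (B_{ij}) \in \mathbb{M}_n(\mathrm{Circ}\,\mathbb{M}_3(\mathbb{T}))$, let $\Phi(B) \in \mathbb{M}_{3n}(\mathbb{T})$ be the $3n \times 3n$ tropical matrix obtained by placing the $3 \times 3$ circulant block $B_{ij}$ in the $(i,j)$ block position. The verification that $\Phi$ respects $\oplus$ and $\odot$ is exactly the familiar block-matrix calculation, and since it only invokes associativity and distributivity it transfers verbatim to the idempotent-semiring setting. $\Phi$ is patently injective, and its image is the subsemiring
\[
J \;:=\; \{\, M \in \mathbb{M}_{3n}(\mathbb{T}) : \text{every } 3 \times 3 \text{ block of } M \text{ is circulant}\,\},
\]
so $\Phi$ provides the isomorphism $\mathbb{M}_n(\mathrm{Circ}\,\mathbb{M}_3(\mathbb{T})) \cong J \leq \mathbb{M}_{3n}(\mathbb{T})$.

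Composing $\Phi \circ \Psi$ delivers the claimed chain. I do not expect any serious obstacle: both steps are routine functorial/block-matrix arguments. The one point that genuinely uses content from the preceding theorem, rather than only formal generalities, is closure of $J$ under multiplication inside $\mathbb{M}_{3n}(\mathbb{T})$; this is equivalent to $\mathrm{Circ}\,\mathbb{M}_3(\mathbb{T})$ being closed under multiplication, which is precisely what the multiplicativity computation for $\psi$ in the previous proof already gives us.
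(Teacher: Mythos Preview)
Your proposal is correct and follows essentially the same route as the paper: the first isomorphism is the entrywise application of $\psi$ (functoriality of $\mathbb{M}_n(-)$), and the second is precisely the block-matrix identification $\mathbb{M}_n(\mathbb{M}_3(\mathbb{T}))\cong\mathbb{M}_{3n}(\mathbb{T})$ restricted to circulant blocks. Your write-up is more detailed than the paper's---in particular your remark that closure of $J$ under multiplication reduces to closure of $\mathrm{Circ}\,\mathbb{M}_3(\mathbb{T})$ under $\odot$ is a nice clarification the paper leaves implicit---but the underlying argument is the same.
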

\begin{proof}
    The first isomorhism is clear. The second came from the ismorphism $\mathbb{M}_n(\mathbb{M}_m(S)) \cong \mathbb{M}_{nm}(S)$ for all ring $S$, given by $\Psi :\mathbb{M}_n(\mathbb{M}_m(S)) \longrightarrow \mathbb{M}_{nm}(S)$, with
    \begin{align*}
    \Psi :\mathbb{M}_n(\mathbb{M}_m(S)) & \longrightarrow \mathbb{M}_{nm}(S) \\
\begin{pmatrix}
A_{11} & A_{12} & \cdots & A_{1n} \\
A_{21} & A_{22} & \cdots & A_{2n} \\
\vdots & \vdots & \ddots & \vdots \\
A_{n1} & A_{n2} & \cdots & A_{nn}
\end{pmatrix} & \mapsto 
\left(
\begin{array}{c|c|c|c}
\begin{matrix} A_{11} \end{matrix} & \begin{matrix} A_{12} \end{matrix} & \cdots & \begin{matrix} A_{1n} \end{matrix} \\
\hline
\begin{matrix} A_{21} \end{matrix} & \begin{matrix} A_{22} \end{matrix} & \cdots & \begin{matrix} A_{2n} \end{matrix} \\
\hline
\vdots & \vdots & \ddots & \vdots \\
\hline
\begin{matrix} A_{n1} \end{matrix} & \begin{matrix} A_{n2} \end{matrix} & \cdots & \begin{matrix} A_{nn} \end{matrix} \\
\end{array}
\right)
    \end{align*}
\end{proof}

\section{Key exchange}
In \cite{Jackson2025}, the following key exchange protocol is proposed

Suppose that two IIoT devices, \( D_1 \) and \( D_2 \), want to share some data. First, they agree on two public matrices \( X, Y \in \mathbb{M}_n(\mathbb{T}) \) and generate the secret key as follows:

\begin{itemize}
    \item \( D_1 \) chooses two natural numbers \( a, b \) and computes:
    \[
    A = X^{\odot a} \odot Y^{\odot b}
    \]
    Then, \( D_1 \) sends \( A \) to \( D_2 \).
    
    \item \( D_2 \) chooses two natural numbers \( c, d \) and computes:
    \[
    B = X^{\odot c} \odot Y^{\odot d}
    \]
    Then, \( D_2 \) sends \( B \) to \( D_1 \).
    
    \item \( D_1 \) computes the secret key:
    \[
    K_A = X^{\odot a} \odot B \odot Y^{\odot b}
    \]
    
    \item \( D_2 \) computes the secret key:
    \[
    K_B = X^{\odot c} \odot A \odot Y^{\odot d}
    \]
\end{itemize}
This public key exchange is based on the assumption that it is hard to solve the following problem.
\begin{definition}
       Given $D_1, D_2, M, U \in \mathbb{M}_n(\mathbb{\overline{T}})$ such that $U=D_1^{\odot t_1}M D_2^{\odot t_2}$ for some $t_1,t_2 \in \mathbb{N}$. The trial two-sided discrete logarithm  is to find $t'_1,t'_2 \in \mathbb{N}$ such that $U=D_1^{\odot t'_1}M D_2^{\odot t'_2}$
\end{definition}
Thanks to the isomorphism given in Corollay \ref{IsomorphismMatrix}, to solve this problem is an instance of the tropical two-side discreth logarithm over $\mathbb{M}_{3n}(\mathbb{T})$. The heuristic algorithm \cite{Algorithm} has an succes rate over closer to $100\%$, and as a result, the proposed protocol is not segure.

\section{Conclusion}\label{sec13}

We have shown arguments against the security of the public key-exchange proposed in \cite{Jackson2025}. We provide a new isomorphism that reduces the two-side discrete logarithm over the triad matrix semiring to the tropical case, where there exist algorithms to solve it in polynomial time. 

\backmatter

\end{document}